\newtheorem{theorem}{Theorem}[section]
\theoremstyle{definition}
\theoremstyle{remark}
\newtheorem{remark}[theorem]{Remark}
\numberwithin{equation}{section}
\DeclareMathOperator*{\argmin}{arg\!\min}
\newcommand{\secref}[1]{Section~\ref{#1}}
\newcommand\numberthis{\addtocounter{equation}{1}\tag{\theequation}}
\numberwithin{equation}{section}
\renewcommand{\theequation}{\arabic{section}.\arabic{equation}}
\begin{document}
\title[Optimal Control on COVID-19] 
{Data-driven Optimal Control of\\ a SEIR model for COVID-19}
\author{Hailiang Liu and Xuping Tian}
\address{Iowa State University, Mathematics Department, Ames, IA 50011} \email{hliu@iastate.edu, xupingt@iastate.edu}
\subjclass{Primary 34H05, 92D30; Secondary 49M05, 49M25}
\begin{abstract}
We present a data-driven optimal control approach which integrates the reported partial data with the epidemic dynamics
for COVID-19. We use a basic Susceptible-Exposed-Infectious-Recovered (SEIR) model, the model parameters are time-varying and learned from the data. This approach serves to forecast the evolution of the outbreak over a relatively short time period and provide scheduled controls of the epidemic. We provide efficient numerical algorithms based on a generalized Pontryagin Maximum Principle associated with the optimal control theory. Numerical experiments demonstrate the effective performance of the proposed model and its numerical approximations.
\end{abstract}

\maketitle

\section{Introduction}
The outbreak of COVID-19 epidemic has resulted in over millions of confirmed and death cases, evoking fear locally and internationally. It has a huge impact on global economy as well as everyone's daily life. Numerous mathematical models are being produced to forecast the spread of COVID-19 in the US and worldwide \cite{AR20, GB20, BF20, LZ20}. These predictions have far-reaching consequences regarding how quickly and how strongly governments move to curb the epidemic. 
We aim to exploit the abundance of available data and integrate existing data with disease dynamics based on epidemiological knowledge.

One of the well-known dynamic models in epidemiology is the SIR model proposed by Kermack and McKendrick \cite{KM27} in 1927. Here, $S, I, R$ represent the number of susceptible, infected and recovered people respectively. They use an ODE system to describe the transmission dynamics of infectious diseases among the population. In the current COVID-19 pandemic, actions such as travel restrictions, physical distancing and self-quarantine are taken to slow down the spread of the epidemic. Typically, there is a significant incubation period during which individuals have been infected but are not yet infectious themselves. During this period the individual is in compartment $E$ (for exposed), the resulting models are of SEIR or SEIRS type, respectively, depending on whether the acquired immunity is permanent or otherwise. Also such models can show how different public health interventions may affect the outcome of the epidemic, and can also predict future growth patterns.

Optimal control provides a perspective to study and understand the underlying transmission dynamics. The classical  theory  of  optimal  control  was  originally  developed to  deal  with systems  of  controlled  ordinary  differential  equations \cite{He66, BP07}. Computational methods have been designed to solve related control problems \cite{KC62, CL82, Sc06}. There is a wide application to various fields, including those for epidemic models, with major control measures on medicine (vaccination), see e.g., \cite{JK20, Be00}, and for Cancer immunotherapy control \cite{CP07}.

In this paper, we integrate the optimal control with a specific SEIR model, though the developed methods can be readily adapted to other epidemic ODE models. More precisely, we introduce a dynamic control model for monitoring the virus propagation. Here the goal is to advance our understanding of virus propagation by learning the model parameters so that the error between the reported cases and the solution to the SEIR model is minimized.
In short, we formulate the  following optimization problem
\begin{align*}
  \min_\theta \quad &J = \sum_{i=1}^{n-1}L(U(t_i)) + g(U(T)),\\
  \text{s.t.}\quad &\dot{U}=F(U,\theta) \quad t\in(0,T], \quad U(0)=U_0, \quad \theta\in\Theta.
\end{align*}
Here $\dot U=\frac{d}{dt}U(t)$, $\theta$ is a time-varying vector of model parameters, $\Theta$ is the admissible set for model parameters $\theta$, and $U=[S, E, I, R, D]^\top$ corresponds to the susceptible, exposed, infected,  recovered and deceased population. The loss function $J$ is composed of two parts: $L$ measures the error between the candidate solution to the SEIR system and the reported data at intermediate observation times, and $g$ measures the error between the candidate solution and the scheduled control data at the end time. This is in contrast to the standard approach of fitting the model to the epidemic data through the non-linear least squares approach \cite{AR20, GB20, BF20}.

\subsection{Main contributions}
In the present work we provide a fairly complete modeling discussion on parameter learning, prediction and control of epidemics spread based on the SEIR model. We begin with our discussion of basic properties of the SEIR model. Then we show how the parameters can be updated recursively by gradient-based methods for a short time period while parameters are near constants, where the loss gradient can be obtained by using both state and co-state dynamics. For an extended time period with reported data available at intermediate observation times, the parameters are typically time-varying. In this general setting, we derive necessary conditions to achieve optimal control in terms of the chosen objectives. The conditions are essentially the classical Pontryagin maximum principal (PMP)\cite{PB62}. The main differences are in the way we apply the principle in each time interval, and connect them consistently by re-setting the co-state $V$ at the end of each time interval. We thus named it the generalized PMP. We further present an algorithm to find the numerical solution to the generalized PMP in the spirit of the method of successive approximations (MSA)\cite{CL82}. Our algorithm is mainly fulfilled by three parts: (1) discretization of the forward problem in such a way that solutions to $U$ remain positive for an arbitrarily step size $h$; (2) discretization of the co-state equation for $V$ is made unconditionally stable; (3) Minimiziation of the Hamiltonian is given explicitly based on the structure of the SEIR model.

This data-driven optimal control approach can be applied to other epidemic models. In particular, the prediction and control can be combined into one framework. To this end, the cost includes terms measuring the error between confirmed cases (infection and death) and those predicted from the model during the evolution, and terms measuring the error between the scheduled numbers as desired and those predicted from the model during the control period.


\subsection{Further related work}
In the mathematical study of SIR models, there is an interplay between the dynamics of the disease and that of the total population, see \cite{AM79, MH92, GD95, Th92}. We refer the reader to \cite{LH87, Gr97} for references on SEIRS models with constant total population and to \cite{LM95} for the proof of the global stability of a unique endemic equilibrium of a SEIR model. Global stability of the endemic equilibrium for the SEIR model with non-constant population is more subtle, see \cite{LG99}. Apart from the compartmental models (and their stochastic counterparts \cite{A08}), a wide variety of methods exist for modeling infectious disease. These include diffusion models \cite{Ca10}, mean-field-type models \cite{Kl99}, cellular automata \cite{WRS06}, agent-based models \cite{HNK18}, network-based models \cite{WS98, KE05, PCH10}, and game-theoretical modelling \cite{BE04, RG11, Ch20}. Some focus on the aggregate behaviour of the compartments of the population, whereas others focus on individual behaviour.

For COVID-19, a data-driven model has been proposed and simulated in \cite{MD20}, where both the SIR model and the feed-forward network are trained jointly. Compared to this work, our data-driven model is to consider the optimal control for SEIR with rigorous derivation of optimality conditions and stable numerical approximations. On the other hand, our data-driven optimal control algorithm may be interpreted as training a deep neural network in which the SEIR model serves as the neural network with parameters to be learned. This is in contrast to the study of residual neural networks using neural ODEs (see e.g., \cite{E17, CR18}) or section dynamics \cite{LM20}. For other works on data-driven learning model parameters using neural networks, see \cite{AL20,JS20}. SIR models with spatial effects have been studied \cite{BR20, LL20}. A SIRT model was proposed in \cite{BR20} to study the effects of the presence of a road on the spatial propagation of the COVID-19. Introduced in \cite{LL20} is a mean-field game algorithm for a spatial SIR model in controlling propagation of epidemics.

Finally, we would like to mention that a variety of works have sought to endow neural networks with physics-motivated equations, which can improve machine learning. To incorporate the equations, parametric approaches have been studied \cite{GDY19, CG_20, LR19, RP19}, where certain terms in the equations are learned, while the equations still govern the dynamics from input to output.

\subsection{Organization}
In \secref{ch:SEIR}, we motivate and present the SEIR system for modelling the time evolution of epidemics, and integrate it with data into an optimal control system. Both algorithms and related numerical approximations are detailed in \secref{ch:alg}. \secref{ch:exps} provides experimental results to show the performance of our data-driven optimal control algorithms. We end with concluding remarks in \secref{ch:con}.


\section{The SEIR model and optimal control}\label{ch:SEIR}
\subsection{Model formulation}
A population of size $N(t)$ is partitioned into subclasses of individuals who are susceptible, exposed (infected but not yet infectious), infectious, and recovered, with sizes denoted by $S(t), E(t), I(t), R(t)$, respectively. The sum $E+I$ is the total infected population.

The dynamical transfer of the population is governed by an ODE system
\begin{align*}
& \dot{S} = A -\beta SI/N  -dS, \\
& \dot{E} =  \beta SI/N - \epsilon E -dE, \\
& \dot{I} = \epsilon E  -\mu I -\gamma I-dI, \\
& \dot{R} = \gamma I -dR, \\
& \dot{D} =\mu I,\numberthis \label{eq:death}
\end{align*}
subject to initial conditions $S_0, E_0, I_0, R_0, D_0$. Here $D(t)$  denotes deaths at time $t$,  $A$ denotes the recruitment rate of the population due to birth and immigration. It is assumed that all newborns are susceptible and vertical transmission can be neglected. $d$ is the natural death rate, $\mu$ is the rate for virus-related death. $\gamma$ is the rate for recovery with $1/\gamma$ being the mean infectious period,  and $\epsilon$ is the rate at which the exposed individuals become infectious with $1/\epsilon$ being the mean incubation period. The recovered individuals are assumed to acquire permanent immunity (yet to be further confirmed for COVID-19); there is no transfer from the $R$ class back to the $S$ class. $\beta$ is the effective contact rate. In the limit when $\epsilon \to \infty$, the SEIR model becomes a SIR model. Note that the involved parameters do not correspond to the actual per day recovery and mortality rates as the new cases of recovered and deaths come from infected cases several days back in time. However, one can attempt to provide some coarse estimations of the ``effective/apparent" values of these epidemiological parameters based on the reported confirmed cases.

\subsection{Solution properties of the SEIR model} Classical disease transmission models typically have at most one endemic equilibrium. If there is no endemic equilibrium, diseases will disappear. Otherwise, the disease will be persistent irrespective of initial positions. Large outbreaks tend to the persistence of an endemic state and small outbreaks tend to the extinction of the diseases.

To understand solution properties of the SEIR system, we simply take $A=bN$ with $b$ the natural birth rate, and focus on the following sub-system
\begin{align*}
& 
\dot S = b N -\beta SI/N  -dS, \\
& 
\dot E =  \beta SI/N - \epsilon E -dE, \\
& 
\dot I= \epsilon E  -\mu I -\gamma I-dI, \\
& 
\dot R= \gamma I -dR
\end{align*}
with the total population size $N=S+E+I+R$. By adding the equations above we obtain
$$
\dot N=(b-d)N-\mu I.
$$
Let $s=S/N, e=E/N, i=I/N, r=R/N$ denote the fractions of the classes $S, E, I, R$ in the population, respectively, then   one can verify that \begin{subequations}\label{new}
\begin{align}
& 
\dot s = b-bs  -\beta is+\mu is, \\
& 
\dot e =  \beta is - (\epsilon+b)e +\mu i e, \\
& 
\dot i= \epsilon e  -(\mu+\gamma +b)i+\mu i^2,
\end{align}
\end{subequations}
where $r$ can be obtained from  $r=1- s-e-i$ or
$$
\dot r= \gamma i  -br +\mu i r.
$$
From biological considerations,  we study system (\ref{new}) in a feasible region
$$
\Sigma=\{(s, e, i)\in \mathbb{R}^3_+, 0\leq s+e+i \leq 1\}.
$$
It can be verified that $\Sigma$ is positively invariant with respect to the underlying dynamic system. We denote $\partial \Sigma$ and $\Sigma^0$ the boundary and the interior
of $\Sigma$ in $\mathbb{R}^3$, respectively. A special solution of form $P^0=(1, 0, 0)$ on the boundary of $\Sigma$ is the disease-free equilibrium of system (\ref{new}) and it exists for all non-negative values of its parameters. Any equilibrium in the interior $\Sigma^0$ of $\Sigma$ corresponds to the disease being endemic and is named an endemic equilibrium.
To identify the critical threshold for the parameters, we take $\epsilon \times$(\ref{new}b)+ $(\epsilon+b)\times$(\ref{new}c) to obtain
\begin{align*}
\frac{d}{dt}[\epsilon e +(\epsilon+b)i]
&= i \left[\epsilon \beta s +\epsilon \mu e +\mu (\epsilon +b)i-(\epsilon+b)(\gamma+\mu+b)
\right] \\
& \leq i\left[  \max\{ \epsilon \beta,
\epsilon \mu, \mu (\epsilon +b)\} -(\epsilon+b)(\gamma+\mu+b) \right] \leq 0,
\end{align*}
provided a key quantity $\sigma \leq 1$, where
$$
\sigma : = \frac{\beta \epsilon}{(\epsilon+b)(\gamma+\mu+b)}
$$
is called the modified contact number \cite{LG99}. 
The global stability of $P^0$ when $\sigma \leq 1$
follows from LaSalle's invariance principle. The following theorem (see \cite{LG99}), a standard type in mathematical epidemiology, shows that the basic number $\sigma$ determines the long-term outcome of the epidemic outbreak.


\begin{theorem} 
1. If $\sigma \leq 1$, then $P^0$ is the unique equilibrium and globally stable in $\Sigma$.\\
2. If $\sigma>1$, then $P^0$ is unstable and the system is uniformly persistent in $\Sigma^0$.
\end{theorem}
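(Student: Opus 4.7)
My plan is to treat the two cases separately, using the Lyapunov candidate $V(s,e,i):=\epsilon e+(\epsilon+b)i$ for Part~1 and linearization together with classical persistence theory for Part~2.

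For Part~1, I would start from the identity already derived in the paragraph preceding the theorem, which along trajectories of~(\ref{new}) reads
\[
\dot V = i\bigl[\epsilon\beta s + \epsilon\mu e + \mu(\epsilon+b)i - (\epsilon+b)(\gamma+\mu+b)\bigr].
\]
Since $s+e+i\le 1$ inside $\Sigma$, the bracket is bounded above by $\max\{\epsilon\beta,\epsilon\mu,\mu(\epsilon+b)\}-(\epsilon+b)(\gamma+\mu+b)$. The two ``$\mu$'' entries are always dominated by $(\epsilon+b)(\gamma+\mu+b)$ via cheap factor-by-factor inequalities, while $\epsilon\beta\le(\epsilon+b)(\gamma+\mu+b)$ is exactly the hypothesis $\sigma\le 1$; hence $\dot V\le 0$ on $\Sigma$. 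Uniqueness of the equilibrium then follows by tracking the equality case: at any equilibrium one has $\dot V=0$, and if $i^*>0$ the bracket would have to vanish, forcing $s^*=1$ and $e^*=i^*=0$ by the same estimate, a contradiction. Thus $i^*=0$, then $\dot e=0$ gives $e^*=0$, and $\dot s=0$ gives $s^*=1$. To upgrade $\dot V\le 0$ to global asymptotic stability I would invoke LaSalle's invariance principle: the largest invariant subset of $\{\dot V=0\}\cap\Sigma\subset\{i=0\}$ is $\{P^0\}$, since on $\{i=0\}$ the $e$-equation reduces to $\dot e=-(\epsilon+b)e$, forcing $e\to 0$, after which the $s$-equation becomes $\dot s=b-bs$ and drives $s\to 1$.

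For Part~2, I would first establish instability of $P^0$ by linearization. The Jacobian of~(\ref{new}) at $(1,0,0)$ is block-triangular, yielding the eigenvalue $-b$ in the $s$-direction together with the $2\times 2$ block
\[
\begin{pmatrix} -(\epsilon+b) & \beta \\ \epsilon & -(\gamma+\mu+b) \end{pmatrix},
\]
whose determinant equals $(\epsilon+b)(\gamma+\mu+b)(1-\sigma)$. When $\sigma>1$ this determinant is negative, so the block admits one positive and one negative eigenvalue; $P^0$ is therefore a hyperbolic saddle, and a direct computation shows its unstable eigenvector has strictly positive $e$ and $i$ components (pointing into $\Sigma^0$) while the second stable eigendirection leaves the positive cone. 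For uniform persistence I would invoke the Butler--McGehee/Thieme framework applied to the compact positively invariant set $\Sigma$: $P^0$ is the only equilibrium on $\partial\Sigma$; the maximal invariant subset of $\partial\Sigma$ is the disease-free face $\{e=i=0\}$, on which $\dot s=b-bs$ drives every orbit to $P^0$, giving acyclicity of the boundary flow; and $P^0$ is a weak repeller from $\Sigma^0$ by the eigen-structure just described.

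The step I expect to be most delicate is this last one, uniform persistence: transverse instability alone is not sufficient, and one must verify both acyclicity on $\partial\Sigma$ and that $W^s(P^0)\cap\Sigma^0=\emptyset$. The acyclicity I would handle by the face analysis above (every boundary orbit not already on $\{e=i=0\}$ leaves $\partial\Sigma$ immediately, while orbits on $\{e=i=0\}$ converge to $P^0$), and the triviality of $W^s(P^0)\cap\Sigma^0$ follows locally from the explicit eigenvector computation, then globally by the standard SEIR argument in~\cite{LG99}.
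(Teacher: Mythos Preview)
Your proposal is correct and follows the same route as the paper: the paper itself only sketches Part~1 via the identical Lyapunov function $V=\epsilon e+(\epsilon+b)i$ together with an appeal to LaSalle's invariance principle, and then defers the full statement (including Part~2) to the reference~\cite{LG99}. Your linearization of the $(e,i)$-block and invocation of the Butler--McGehee/Thieme persistence framework is exactly the standard argument carried out there, so you have in fact supplied more detail than the paper does.
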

By this theorem, the disease-free equilibrium $P^0$ is globally stable in $\Sigma$ if and only if $\sigma \leq 1$. Its epidemiological implication is that the infected fraction of the population vanishes in time so the disease dies out. In addition, the number $\sigma$ serves as a sharp threshold parameter;  if $\sigma>1$, the disease remains endemic.

In the epidemic literature, another threshold quantity is the basic reproduction number
$$
R_0= \frac{\beta}{\gamma+\mu},
$$
which is the product of the contact rate $\beta$ and the average infectious period
$\frac{1}{\gamma+\mu}$. 
It is a parameter well known for quantifying the epidemic spread \cite{DH99, DW02}. On the other hand, the contact number $\sigma$ is defined at all times. In general, we have
$$
R_0 \geq \sigma.
$$
Note that for most models, $\sigma=R_0$, both quantities can be used interchangeably. This is the case in our experiments when taking $b=0$.




\begin{remark} Extensive evidence shows that the disease spread rate is sensitive to at least three factors: (1) daily interactions, (2) probability of infection, and  (3) duration of illness.  The above assertion shows that making efforts to decrease $R_0$ is essential for controlling propagation of epidemics. Hence measures such as social distancing (self-quarantine, physical separation), washing hands and wearing face coverings, as well as testing / timely-hospitalization can collectively decrease $R_0$.
\end{remark}

\begin{remark}
The above model if further simplified will reduce to the SIR model or SIS model, which is easier to analyze \cite{He00, CB01, HLM14}. It can also be  enriched by dividing into different groups \cite{Ko09, Su10}, or by considering the spatial movement effects \cite{HI95, Ke65}. In this work, we use the SEIR model as a base for our analysis, prediction, and control.
\end{remark}








\subsection{A simple control}
In compact form, the SEIR system may be written as
\begin{align*}
  \dot{U} = F(U; \theta), \quad U(0) &= U_0,
\end{align*}
where
\begin{align*}
  U =[S,E,I,R,D]^\top, \quad   \theta =[\beta,\epsilon,\gamma,\mu]^\top
\end{align*}
are the column vectors of the state and parameters, respectively. Let $g(\cdot)$ be a loss function (to be specified later) at the final time $T$, then the problem of determining the model parameters based on this final cost can be cast as a dynamic control problem:
$$
{\rm min}_\theta \{ g(U(T)) \quad \hbox{subject to }\; \;  \dot U=F(U, \theta),\quad 0<t\leq T; \quad U(0)=U_0; \quad \theta\in\Theta \}.
$$
Here the dependence of $g$ on $\theta$ is through $U(T)$. The set of admissible parameters $\Theta$ may be estimated from other sources. 

For a short time period, the model parameters are near constant, then we can use gradient-based methods to directly update $\theta$, for which $\nabla_\theta g$ needs to be evaluated.
We define the Lagrangian functional
\begin{align*}
\mathcal{L}(\theta) & =g(U(T))-\int_0^T ( \dot U -F(U, \theta))^\top V dt,
\end{align*}
where $V=V(t)$ is the Lagrangian multiplier depending on time and can be chosen freely, and $U$ depends on $\theta$ through the ODE. A formal calculation gives
\begin{align*}
\nabla_{\theta} g & =\nabla_{\theta} \mathcal{L} \\
& = (\nabla_{\theta}U(T))^\top \nabla_U g + \int_0^T \left((\nabla_U F)(\nabla_{\theta} U(t)) +\nabla_{\theta} F -\nabla_{\theta} \dot U(t)\right)^\top V(t) dt \\
& = (\nabla_{\theta}U(T))^\top \left(\nabla_U g -V(T)\right)  + (\nabla_{\theta} U(0))^\top V(0)  \\
& \qquad
+\int_0^T (\nabla_{\theta} U(t))^\top\left((\nabla_U F)^\top V(t) +\dot V(t)\right) + (\nabla_{\theta} F)^\top V(t) dt.
\end{align*}
Thus $\nabla_\theta g$ can be determined in the following steps:
\begin{itemize}
\item Solve the forward problem for state $U$,
\begin{align*}
\dot U(t)=F(U(t), \theta), \quad U(0)=U_0.
\end{align*}
\item Solve the backward problem for co-state $V$,
\begin{align*}
\dot V(t)=- (\nabla_U F)^\top V(t), \quad V(T)=\nabla_U g(U(T)).
\end{align*}
\item Evaluate the gradient of $g$ by
\begin{align*}
\nabla_\theta g=\int_0^T  (\nabla_\theta F)^\top V(t)dt.
\end{align*}
\end{itemize}
In the context of the optimal control theory \cite{PB62}, such $V$ exists and is called the co-state function.
We note that the $(U, V)$ dynamical system models the optimal strategies for $S, E,  I, R$ populations.
\begin{remark} It is remarkable that computing gradients of a scalar-valued loss with respect to all parameters is done using only state and co-state functions without backpropagating through the operations of the solver.  In addition, modern ODE solvers may be used to compute both $U$ and $V$.
\end{remark}

\subsection{Data-driven optimal control}
Now we consider an extended time period with reported data available at intermediate observation times. The data are taken from the reported cumulative infection and death cases.\footnote{Data used here is publicly available in the CSSEGISandData/COVID-19 GitHub repository, which collects data from official sources and organizations.}  In such general case the parameters are typically time-varying, we need to derive a more refined data-driven optimal control. Arranging the data in a vector $U_c = [I_c, D_c]^\top$
at times $0=t_0<t_1<t_2<...<t_n=T$, we aim to
\begin{itemize}
  \item find optimal parameter $\theta(t)$ for $0\le t \le t_{n-1}$ such that the solution to the SEIR system fits the reported data at the grid points $\{t_i\}_{i=1}^{n-1}$ as close as possible, and
  \item find desired parameter $\theta(t)$ for $t_{n-1}\le t\le T$ that is able to control the epidemic spreading at time $T$ at desired values.
\end{itemize}
To achieve these goals, we first define a loss function by
\begin{equation*}
  J=\sum_{i=1}^{n-1}L(U(t_i))+g(U(T)),
\end{equation*}
where
\begin{align}\label{eq:loss}
  L(U(t_i)) &= \lambda_1|I(t_i)-I_c(t_i)|^2+\lambda_2|D(t_i)-D_c(t_i)|^2,\quad 1\le i\le n-1,\\
  g(U(T)) &= \lambda_1|I(T)-I_d(T)|^2+\lambda_2|D(T)-D_d(T)|^2.\notag
\end{align}
Here $\lambda_1, \lambda_2$ are user-defined normalization factors. This loss function is composed of two parts:  the running cost $L$ which measures the error between the candidate solution $(I,D)$ to the SEIR system and the reported data $(I_c, D_c)$ at intermediate times; and the final cost $g$, which measures the error between the candidate solution $(I,D)$ and the desired data $(I_d, D_d)$ at the end time.

We then formulate the task as the following optimal control problem
\begin{equation}\label{eq:opt-control}
\begin{aligned}
  \min \quad &J = \sum_{i=1}^{n-1}L(U(t_i)) + g(U(T)),\\
  \text{s.t.}\quad &\dot{U}=F(U,\theta) \quad t\in(0,T], \quad U(0)=U_0, \quad \theta\in\Theta.
\end{aligned}
\end{equation}
Motivated by the classical optimal control theory, we derive the necessary conditions for the optimal solution to problem \eqref{eq:opt-control}, stated in the following theorem.
\begin{theorem}\label{alg:pmp}
Let $\theta^*$ be the optimal solution to problem \eqref{eq:opt-control} and $U^*$ be the corresponding state function, then there exists a piece-wise smooth function $V^*$ and a Hamiltonian $H$ defined by
\begin{equation}\label{eq:hp}
  H(U,V,\theta) = V^\top F(U,\theta)
\end{equation}
such that
\begin{equation}\label{eq:fp}
  \dot{U^*} = F(U^*,\theta^*),\quad t\in(0,T], \quad U^*(0)=U_0,
\end{equation}
\begin{equation}\label{eq:bp}
\begin{aligned}
  \dot{V^*} &= -(\nabla_UF(U^*,\theta^*))^\top V, \quad t_{i-1}\le t<t_i,\quad i=n,...,1,\\
  V^*(T) &=\nabla_Ug(U^*(T)),\\
  V^*(t_i^-) &= V^*(t_i^+) + \nabla_UL(U^*(t_i)), \quad i=n-1,...,1
\end{aligned}
\end{equation}
are satisfied. 
Moreover, the Hamiltonian minimization condition
\begin{equation*}
  H(U^*, V^*, \theta^*) \le H(U^*, V^*, a) \quad \forall a\in\Theta
\end{equation*}
holds for all time $t\in[0,T]$ but $\{t_i\}_{i=1}^{n-1}$.
\end{theorem}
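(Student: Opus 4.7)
The plan is to reduce the problem to a concatenation of classical Pontryagin problems on the subintervals $[t_{i-1},t_i]$, glued together by jump conditions on the co-state that account for the running cost contributions at the observation times. The derivation mirrors the Lagrangian calculation already carried out in the excerpt for the simple terminal-cost control, with the added bookkeeping for the interior measurement nodes.

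First, I introduce the augmented functional
\begin{equation*}
  \mathcal{L}(\theta) = \sum_{i=1}^{n-1} L(U(t_i)) + g(U(T)) - \int_0^T \bigl(\dot U - F(U,\theta)\bigr)^\top V\, dt,
\end{equation*}
where $V$ is a piecewise smooth multiplier to be chosen. Along any admissible trajectory, $\mathcal{L}(\theta) = J(\theta)$, so $\nabla_\theta J = \nabla_\theta \mathcal{L}$. Splitting the integral as $\int_0^T = \sum_{i=1}^{n} \int_{t_{i-1}}^{t_i}$ and integrating $\dot U^\top V$ by parts on each piece produces boundary contributions at every $t_i$ together with the usual interior term $(\nabla_U F)^\top V + \dot V$ multiplying $\partial_\theta U$. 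Forcing the coefficient of $\partial_\theta U$ to vanish in the interior of each subinterval gives the backward ODE \eqref{eq:bp}$_1$ on $(t_{i-1},t_i)$; forcing it to vanish at the terminal time gives $V^*(T)=\nabla_U g(U^*(T))$.

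Second, for the jump conditions at an interior node $t_i$, the boundary contributions from the $i$-th and $(i{+}1)$-th subintervals, plus the $\nabla_U L(U^*(t_i))^\top \partial_\theta U(t_i)$ term coming from differentiating $L(U(t_i))$ under $\sum_{i=1}^{n-1}L(U(t_i))$, combine into
\begin{equation*}
  \bigl[\, V^*(t_i^-) - V^*(t_i^+) - \nabla_U L(U^*(t_i))\,\bigr]^\top \partial_\theta U(t_i).
\end{equation*}
Since $U$ is continuous at $t_i$ and $\partial_\theta U(t_i)$ can be varied independently, this bracket must vanish, yielding the prescribed jump $V^*(t_i^-) = V^*(t_i^+) + \nabla_U L(U^*(t_i))$. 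Running this backward from $i=n-1$ down to $i=1$ pins down $V^*$ uniquely, piecewise smooth on $[0,T]\setminus\{t_i\}$, which verifies \eqref{eq:bp}.

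Third, for the Hamiltonian minimization, I would use a needle (spike) variation rather than a smooth perturbation, since $\Theta$ need not be convex nor $\theta^*$ differentiable. Fix $\tau \in [0,T]\setminus\{t_i\}_{i=1}^{n-1}$, $a\in\Theta$, and for small $\delta>0$ replace $\theta^*$ by $a$ on $[\tau,\tau+\delta]$ (chosen inside a single subinterval so no jump is crossed). A standard variational-equation argument propagates the perturbation forward along $\dot U = F(U,\theta)$ and gives, to first order in $\delta$,
\begin{equation*}
  J(\theta^\delta) - J(\theta^*) = \delta \bigl[\,H(U^*(\tau), V^*(\tau), a) - H(U^*(\tau), V^*(\tau), \theta^*(\tau))\,\bigr] + o(\delta),
\end{equation*}
where the co-state $V^*$ defined above is precisely what transports the endpoint sensitivities (including the $L$-jumps) back to time $\tau$. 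Optimality of $\theta^*$ forces the bracket to be non-negative for every admissible $a$, which is the claimed minimization condition. The measurement nodes are excluded because a needle variation straddling $t_i$ would pick up an extra $\nabla_U L$ contribution that spoils the clean first-order expansion.

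The main obstacle will be the jump step: the sign and form of $\nabla_U L(U^*(t_i))$ must be tracked carefully through the integration by parts and through the needle-variation propagation, because moving $V^*$ across $t_i$ is where the algorithm's whole backward sweep is determined. The interior co-state ODE and the terminal condition are essentially classical; what is new, and what needs to be verified with some care, is that the same multiplier $V^*$ that implements \eqref{eq:bp} on the whole interval is simultaneously the one that makes the needle-variation formula above valid on every subinterval.
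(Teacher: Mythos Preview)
Your argument is correct, but it proceeds along a different path than the paper's. The paper's proof is essentially a one-step reduction: it rewrites the discrete running cost as $\int_0^T \sum_{i=1}^{n-1} L(U(t))\,\delta(t-t_i)\,dt$, thereby embedding \eqref{eq:opt-control} into a classical Bolza problem, and then simply invokes the standard Pontryagin maximum principle. The delta functions reappear in the co-state equation \eqref{eq:co-state}, and integrating that equation across each $t_i$ produces the jump conditions in \eqref{eq:bp}; away from the $t_i$ the Hamiltonian collapses to \eqref{eq:hp}. By contrast, you build the co-state and the jump conditions from scratch via the piecewise Lagrangian and integration by parts, and then establish the Hamiltonian minimization separately through a needle variation confined to a single subinterval. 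The paper's route is shorter and more structural---it makes transparent that the theorem is nothing but PMP applied to a measure-valued running cost---though the delta manipulation is formal. Your route is more self-contained and arguably more rigorous, since you never have to justify differentiating through a Dirac measure and you give an honest first-order expansion for the minimum condition; it also explains directly why the nodes $t_i$ must be excluded from the minimization statement, which the paper obtains only as a by-product of the reduction.
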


\begin{proof}
Recall the classical Bolza optimal control problem \cite{BP07}
\begin{align*}
  \min \quad &J = \int_0^TL(U(t),\theta(t))dt + g(U(T))\\
  \text{s.t.}\quad &\dot{U}=F(U,\theta), \quad t\in[0,T], \quad U(0)=U_0, \quad \theta(t)\in\Theta.
\end{align*}
The Pontryagin's maximum principle states the necessary conditions for optimality: assume $\theta^*$ and $U^*$ are the optimal control function and corresponding state of the system, then there exists a function $V^*$ and a Hamiltonian $H$ defined for all $t\in[0,T]$ by
\begin{equation*}
  H(U,V,\theta) = V^{T}F(U,\theta) + L(U,\theta)
\end{equation*}
such that
\begin{align}
  \dot{U^*} &= \nabla_V H(U^*,V^*,\theta^*),\quad U^*(0)=U_0,\label{eq:state}\\
  \dot{V^*} &= -\nabla_UH(U^*,V^*,\theta^*),\quad V^*(T) = \nabla_Ug(U^*(T)),\label{eq:co-state}
\end{align}
are satisfied. Moreover, the Hamiltonian minimization condition
\begin{equation*}
  H(U^*,V^*,\theta^*) \le H(U^*,V^*,a)
\end{equation*}
holds for all time $t\in[0,T]$ and for all permissible control function $a\in\Theta$.

Notice that the loss function in \eqref{eq:opt-control} can be rewritten as
\begin{equation*}
  J = \int_{0}^{T}\sum_{i=1}^{n-1}L(U(t))\delta(t-t_i)dt + g(U(T)),
\end{equation*}
where $\delta(x)$ is the Dirac delta function, then the corresponding Hamiltonian reads as
\begin{equation}\label{eq:hamiltonian}
  H(U,V,\theta) = V^{T}F(U,\theta) + \sum_{i=1}^{n-1}L(U(t))\delta(t-t_i).
\end{equation}
Hence \eqref{eq:state} can be expressed as \eqref{eq:fp}, \eqref{eq:co-state} has the form of
\begin{equation}\label{eq:co-state-dis}
  \dot{V} = -\sum_{i=1}^{n-1}\nabla_UL(U(t))\delta(t-t_i) - (\nabla_UF(U,\theta))^\top V,\quad V(T)=\nabla_Ug(U(T)).
\end{equation}
Integrate \eqref{eq:co-state-dis} from $t^-_i-\epsilon$ to $t^+_i+\epsilon$ and let $\epsilon$ goes to $0$, then for $i=n-1,...,1$
\begin{align*}
  V(t^+_i)-V(t^-_i)=\nabla_UL(U(t_i)).
\end{align*}
Thus in each interval, \eqref{eq:co-state-dis} reduces to
\begin{equation*}
  \dot{V} = -(\nabla_UF(U,\theta))^\top V,  \quad t_{i-1}\le t<t_i,\quad i=n,...,1,
\end{equation*}
which is exactly \eqref{eq:bp}. Correspondingly, the Hamiltonian \eqref{eq:hamiltonian} reduces to \eqref{eq:hp} for all $t\in[0,T]$ but $\{t_i\}_{i=1}^{n-1}$.
\end{proof}

\section{Numerical discretization and algorithms}\label{ch:alg}
In this section, we present implementation details to solve problem \eqref{eq:opt-control} via solving the generalized PMP by iteration.
We proceed in the following manner. First we make an initial guess $\theta_0 \in \Theta$. From the control function $\theta_{l}(t)$ in the $l$-th iteration for $l=0, 1, 2,\cdots$,  we obtain  $\theta_{l+1}(t)$ in three steps: \\[5pt]
\textbf{Step 1:} Solve the forward problem (\ref{eq:fp}) to obtain $U_{l}$.\\
\textbf{Step 2:} Solve the sequence of backward problems (\ref{eq:bp}) to obtain $V_{l}$.\\
\textbf{Step 3:} $\theta_{l+1} = \argmin_{\theta\in\Theta} H(U_{l}, V_{l}, \theta, t)$ for each $t\in[0,T]$.\\

This is essentially the method of successive approximations (MSA) \cite{CL82}. 
An important feature of MSA is that the Hamiltonian minimization step is decoupled for each $t$. However, in general MSA tends to diverge, especially if the bad initial guess is taken \cite{CL82}.

Our goal is to adopt a careful discretization at each step to
control possible divergent behavior, instead of simply call an existing ODE solver for Steps 1-2, and an optimization algorithm for Step 3.



Within each interval $(t_{i-1}, t_i)$, we approximate both $U$ and $V$ at the same $m$ points with step size
$h=(t_i-t_{i-1})/m$, so that the value of $U$ required in Step 2 are already calculated in Step 1.
We use $U^{i,k}, V^{i,k}, \theta^{i,k}$, where $i=1,...,n$ and $k=0,...,m$ to denote solution values at $k$-th point in the $i$-th interval.

Note that population dynamics (births or deaths) may be neglected at a very crude level on the grounds that epidemic dynamics often occur on a faster time scale than host demography, or we can say heuristically that death of an infected individual and subsequent replacement by a susceptible (in the absence of vertical transmission) is equivalent to a recovery event. Hence, in the numerical study, both $b$ and $d$ are taken to be zero.

Below we discuss the discretization of  the three iteration steps for the case $b=d=0$.

\subsection{Forward discretization}\label{ch:forward}
We focus on the time interval $(t_{i-1},t_i)$. For notation  simplicity, we use $U^k, V^k, \theta^k$ to represent corresponding values at the $k$-th point. We discretize the forward problem \eqref{eq:fp} by an explicit-implicit method
with the Gauss-Seidel type update:
\begin{align*}
  \frac{S^{k+1}-S^k}{h} &= -\beta^kS^{k+1}I^{k}/N^k,\\
  \frac{E^{k+1}-E^k}{h} &= \beta^kS^{k+1}I^k/N^k - \epsilon^kE^{k+1},\\
  \frac{I^{k+1}-I^k}{h} &= \epsilon^kE^{k+1} - (\gamma^k+\mu^k)I^{k+1},\\
  \frac{R^{k+1}-R^k}{h} &= \gamma^kI^{k+1},\\
  \frac{D^{k+1}-D^k}{h} &= \mu^kI^{k+1}.
\end{align*}
This gives
\begin{equation}\label{eqs:forward}
\begin{aligned}
  S^{k+1} &= \frac{S^k}{1+h\beta^kI^{k}/N^k},\\
  E^{k+1} &= \frac{E^k+h\beta^kS^{k+1}I^{k}/N^k}{1+h\epsilon^k}, \\
  I^{k+1} &= \frac{I^k+h\epsilon^kE^{k+1}}{1+h(\gamma^k+\mu^k)}, \\
  R^{k+1} &= R^k + h\gamma^kI^{k+1},\\
  D^{k+1} &= D^k + h\mu^kI^{k+1},
\end{aligned}
\end{equation}
where $k=0,1,...,m-1$, $h=(t_i-t_{i-1})/m$ is the step size. The most important property of this update is its unconditional positivity-preserving property, i.e,
$$
U^k \geq 0 \Rightarrow U^{k+1}>0, \; k=0, \cdots
$$
irrespective of the size of the step size $h$.
For the starting value in each interval, we have
$$
U^{1,0}=U(0),  \quad U^{i,0}=U^{i-1,m}, \quad i=2,\cdots, n-1.
$$
\subsection{Backward discretization}\label{ch:backward}
Let $V=[V_S,V_E,V_I,V_R,V_D]^\top$ and notice that
\[
\nabla_UF=
\begin{bmatrix}
-\beta I(N-S)/N^2 &     0     &   -\beta S(N-I)/N^2  & 0 & 0\\
 \beta I(N-S)/N^2 & -\epsilon &    \beta S(N-I)/N^2  & 0 & 0\\
      0    &  \epsilon & -(\mu+\gamma) & 0 & 0\\
      0    &     0     &     \gamma    & 0 & 0\\
      0    &     0     &     \mu       & 0 & 0
\end{bmatrix},
\]
then \eqref{eq:bp} takes the following form
\begin{align*}
  \dot{V_S} &= \beta I(N-S)(V_S-V_E)/N^2,\\
  \dot{V_E} &= \epsilon (V_E-V_I),\\
  \dot{V_I} &= \beta S(N-I)(V_S-V_E)/N^2 + \gamma(V_I-V_R) + \mu(V_I-V_D),\\
  \dot{V_R} &= \dot{V_D} = 0.
\end{align*}
In a similar fashion to the discretization of $U$, we discretize this system by an explicit-implicit method by
\begin{align*}
  \frac{V^{k+1}_S-V^k_S}{h} &= \beta^kI^k(N^k-S^k)(V_S^{k}-V_E^{k+1})/(N^k)^2,\\
  \frac{V^{k+1}_E-V^k_E}{h} &= \epsilon^k(V_E^{k}-V_I^{k+1}),\\
  \frac{V^{k+1}_I-V^k_I}{h} &= \beta^kS^k(N^k-I^k)(V_S^k-V_E^k)/(N^k)^2 + \gamma^k(V^{k}_I-V^{k+1}_R) + \mu^k(V^{k}_I-V^{k+1}_D),\\
  V^{k}_R &= V^{k+1}_R,\qquad V^{k}_D = V^{k+1}_D.
\end{align*}
Then the scheme is given by
\begin{equation}\label{eqs:backward}
\begin{aligned}
  V^{k}_S &= \frac{V^{k+1}_S(N^k)^2+h\beta^kI^k(N^k-S^k)V^{k+1}_E}{(N^k)^2+h\beta^kI^k(N^k-S^k)},\\
  V^{k}_E &= \frac{V^{k+1}_E+h\epsilon^kV^{k+1}_I}{1+h\epsilon^k},\\
  V^{k}_I &= \frac{V^{k+1}_I+h\left(\gamma^kV_R+\mu^kV_D-\beta^kS^k(N^k-I^k)(V_S^k-V_E^k)/(N^k)^2\right)}{1+h(\gamma^k+\mu^k)},\\
  V^{k}_R &= V^{k+1}_R,\qquad V^{k}_D = V^{k+1}_D,
\end{aligned}
\end{equation}
where $k=m-1,...,0$, $h=(t_i-t_{i-1})/m$ is the step size.
For the starting value in each interval, we have
$$
V^{n,m}=\nabla_Ug(U(T)),  \quad V^{i,m}=V^{i+1,0} + \nabla_UL(U(t_i)), \quad i=n-1,\cdots, 1.
$$
Note that the above discretization is well-defined for any $h>0$, hence unconditionally stable since the system is linear in $V$.


\subsection{Hamiltonian minimization}\label{ch:minimization}
The Hamiltonian \eqref{eq:hp} is given by
\begin{align*}
  H(U,V,\theta) = -V_S\beta SI/N + V_E(\beta SI/N - \epsilon E) + V_I(\epsilon E - (\gamma+\mu)I) + V_R\gamma I + V_D\mu I.
\end{align*}
After plugging in $U(t), V(t)$ that is obtained by solving the above forward and backward problems with given $\theta(t)$, $H$ may be seen as a functional only with respect to $\theta$. We solve it by the proximal point algorithm (PPA) \cite{Ro76}, that is for any fixed time $t\in(t_{i-1},t_i)$, $i=1,2,...,n,$
\begin{equation}\label{eq:ppa-c}
  \theta_{l+1}(t) = \argmin_{\theta(t)\in\Theta}\left\{H(U_l(t),V_l(t),\theta(t))+\frac{1}{2\tau}\|\theta(t)-\theta_{l}(t)\|^2\right\},
\end{equation}
where $l$ is the index for iteration, $\tau$ is the step size.
\begin{remark}
The use of PPA brings two benefits: (1) the objective function in \eqref{eq:ppa-c} is a convex function, which ensures the existence of the solution to the minimization problem; (2) it is numerically stable: PPA has the advantage of being monotonically decreasing, which is guaranteed for any step size $\tau>0$. In this way, the convergence of minimizing the original loss in \eqref{eq:opt-control} with a regularization term is ensured, hence address the convergence issue that MSA generally has \cite{A68}.

\end{remark}
Given discretized $U^\alpha$ and $V^\alpha$ with $\alpha=\{i, k\}$,
we solve \eqref{eq:ppa-c} on grid points, that is to solve
\begin{equation}\label{eq:ppa}
  \theta^\alpha_{l+1} = \argmin_{\theta\in\Theta}\left\{H(U^\alpha_l, V^\alpha_l,\theta)+\frac{1}{2\tau}\|\theta-\theta^\alpha_{l}\|^2\right\}.
\end{equation}
Since $H$ is smooth, the above formulation when the constraint is not imposed is equivalent to the following
$$
\theta^\alpha_{l+1}=\theta^\alpha_{l}-\tau \nabla_{\theta}{H}(U^\alpha_l, V^\alpha_l, \theta^\alpha_{l+1}).
$$
The special form of $H$ allows us to obtain a closed form solution:
\begin{equation}\label{eqs:theta}
\begin{aligned}
  \beta^\alpha_{l+1} &= \beta^\alpha_l + \tau S^\alpha_l I^\alpha_l((V_S)^\alpha_l-(V_E)^\alpha_l)/N^\alpha_l,\\
  \epsilon^\alpha_{l+1} &= \epsilon^\alpha_l + \tau E^\alpha_l((V_E)^\alpha_l-(V_I)^\alpha_l),\\
  \gamma^\alpha_{l+1} &= \gamma^\alpha_l + \tau I^\alpha_l((V_I)^\alpha_l-(V_R)^\alpha_l),\\
  \mu^\alpha_{l+1} &= \mu^\alpha_l + \tau I^\alpha_l((V_I)^\alpha_l-(V_D)^\alpha_l).
\end{aligned}
\end{equation}
Now taking the constraint $\theta\in\Theta$ into consideration, we simply project the solution \eqref{eqs:theta} back into the feasible region after each iteration, that is
\begin{equation*}
  \theta^\alpha_{l+1} = clip(\theta^\alpha_{l+1}, \underline{\Theta}, \overline{\Theta})
\end{equation*}
where $\underline{\Theta}$, $\overline{\Theta}$ are element-wise lower bound and upper bound of $\Theta$, respectively. That guarantees the  output is constrained to be in $\Theta$.
\begin{remark}
The step size for each parameter may be set differently according to their magnitude scale, and this can indeed improve the training performance, as observed in our experiments.
\end{remark}

\subsection{Algorithms}
The above computing process is wrapped up in Algorithm \ref{alg1}.
Since we can update $\{\theta^{i,k}\}$ simultaneously, we consider $\{\theta^{i,k}\}$ as a vector and still denote it as $\theta$ for notation simplicity. We use $\|\cdot\|$ to stand for the $L_2$ norm.
\begin{algorithm}
\caption{}
\label{alg1}
\begin{algorithmic}[1] 
\Require $\{U_c(t_i)\}_{i=1}^{n}$: data, $t_n=T$: final time, $U_{0}$: initial data at $0$, $\theta_0$: initial guess, $\tau$: step size for the minimization problem
\While{$\|\theta_{l}-\theta_{l-1}\|/\|\theta_{l-1}\|>$ Tol}
\For{$i=1$ to $n$}
\For{$k=0$ to $m-1$}
\State $U^{i,k+1} \leftarrow U^{i,k}$ (solve the forward problem, refer to \eqref{eqs:forward})
\EndFor
\State $U^{i,0} \leftarrow U^{i-1,m}$ (update the initial condition for ODE solver)
\EndFor
\State $V^{n,m} \leftarrow \partial_Ug(U(T))$ (set the initial data for the backward problem)
\For{$i=n$ to $1$}
\For{$k=m-1$ to $0$}
\State $V^{i,k} \leftarrow V^{i,k+1}$ (solve the backward problem, refer to \eqref{eqs:backward})
\EndFor
\State $V^{i,m} \leftarrow V^{i+1,0} + \partial_UL(U(t_i))$ (update the initial condition for ODE solver)
\EndFor
\State $\theta_{l+1} \leftarrow \theta_l$ (solve the minimization problem, refer to \eqref{eqs:theta})
\State $\theta_{l+1} \leftarrow clip(\theta_{l+1}, \underline{\Theta}, \overline{\Theta})$ (ensure $\theta\in\Theta$)
\EndWhile
\State \textbf{return} $\theta, U$
\end{algorithmic}
\end{algorithm}

This data-driven optimal control algorithm provides an optimal fitting to both the data and the SEIR model, it can help to reduce the uncertainty in conventional model predictions with standard data fitting.

{\bf Initialization:}
We now discuss how to initialize the control function $\theta$.
A simple choice is to take $\theta_0^{i, k}$ to be a constant vector
for all $1\le i\le n, 1\le k\le m$,
where the value of 
each component relies on a priori epidemiological and clinical information about the relative parameter magnitude. They vary with the area from where the data were sampled.

We can also use the data $\{D_c(t_i)\}_{i=0}^{n}, \{I_c(t_i)\}_{i=0}^{n}$ to obtain a better initial guess for $\mu$.
More precisely, from \eqref{eq:death}, i.e., $\mu=\dot{D}/I$ we take
\begin{equation*}
  \mu^{i,k}_0 = \frac{D_c(t_{i+1})-D_c(t_i)}{(t_{i+1}-t_i)I_c(t_{i+1})} \quad \forall k=0,1,...,m.
\end{equation*}
The initial data for the forward problem is set as
\begin{equation*}
  U_0=[N(0)-I_c(0), 0, I_c(0), 0, D_c(0)]^\top,
\end{equation*}
where $N(0)$ is the initial population of the area analyzed, $I_c(0)$, $D_c(0)$ are the confirmed infections and deaths on the day of the first confirmed cases, respectively.

The initial condition of the backward problem is given in \eqref{eq:bp}. In the present setup, the loss function $g$ only depends on $I$ and $D$, hence the initial condition is given by
\begin{equation*}
  V^{n,m}=[0,0,\partial_Ig(U(T)),0,\partial_Dg(U(T))]^\top.
\end{equation*}

Finally, we should point out that Algorithm \ref{alg1} with a rough initial guess $\theta_0$ can be rather inefficient when $T$ is large. In such case, we divide the whole interval as $0=t_{n_0}<t_{n_1}<...<t_{n_s}<t_{n_{s+1}}=T$ with $t_{n_j}\in\{t_i\}_{i=1}^{n-1}$ for $1\le j\le s$
and apply Algorithm \ref{alg1} to each subinterval consecutively. 
This treatment is summarized in Algorithm \ref{alg2}.

\begin{algorithm}
\caption{}
\label{alg2}
\begin{algorithmic}[1] 
\Require $\{U_c(t_i)\}_{i=1}^{n}$, $U_{0}$: initial data, $\tau$: step size for the minimization problem.
\Require $\{t_{n_j}\}_{j=0}^{s+1}$, $\theta_0$: initial guess of $\theta$ on $[t_{n_0}, t_{n_1}]$
\For{$j=0$ to $s$}
\State $\theta, U$ = Algorithm 1($\{U_c(t_i)\}_{i=n_j+1}^{n_{j+1}}$, $[t_{n_{j}}, t_{n_{j+1}}]$,  $U_{0}$, $\theta_0$, $\tau$)
\State $\theta_0, U_{0} \leftarrow \theta(t_{n_{j+1}}), U(t_{n_{j+1}})$
\State \textbf{return} $\theta, U$
\EndFor
\end{algorithmic}
\end{algorithm}

\begin{remark} If parameters vary dramatically, $\theta(t_{n_j})$ may not be a good initialization for $\theta$ on $[t_{n_j}, t_{n_{j+1}}]$. When this happens we switch to the rough initial guess and test by trial and error.
\end{remark}


\begin{figure}[ht]
\begin{subfigure}[b]{0.5\linewidth}
\centering
\includegraphics[width=1\linewidth]{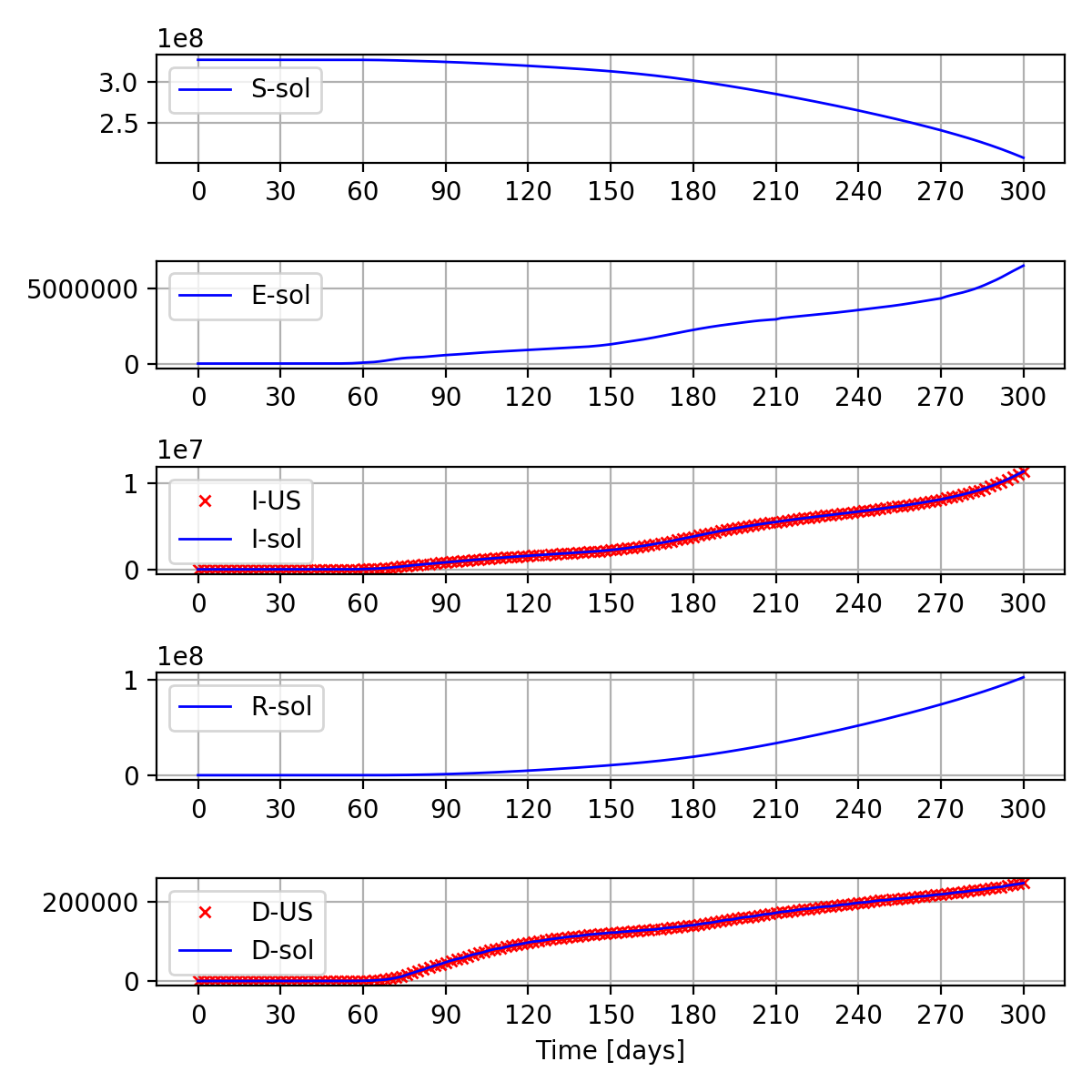}
\caption{}
\end{subfigure}%
\begin{subfigure}[b]{0.5\linewidth}
\centering
\includegraphics[width=1\linewidth]{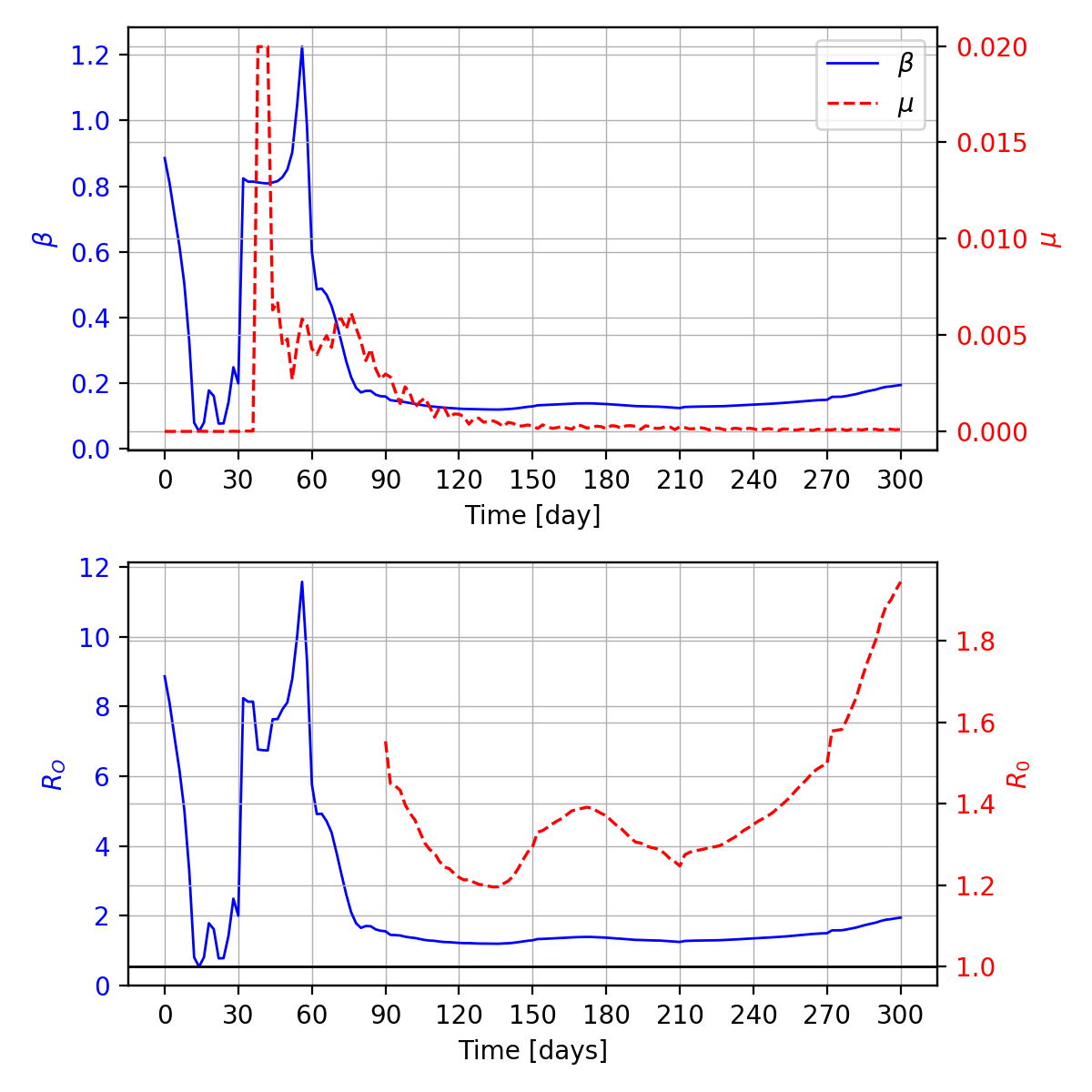}
\caption{}
\end{subfigure}%
\caption{(a) Reported and fitted cumulative infection and death cases in the US (b) Estimated SEIR parameters and the basic reproduction number. $\beta$ ($\mu$) corresponds to the left (right) vertical axis, $\epsilon=0.2$ and $\gamma=0.1$ are almost constant. The dashed line in $R_0$ is a zoomed-in version on the tail of the solid line.}
\label{fig:fitting_us}
\end{figure}%

\section{Experiments}\label{ch:exps}
We now present experimental results to demonstrate the good performance of our algorithms.\footnote{We make our code available at \url{https://github.com/txping/op-seir}} In all the experiments, the normalization factors $\lambda_1$, $\lambda_2$ in \eqref{eq:loss} are chosen 
such that the loss with respect to the infection cases is at the same scale as the loss with respect to the death cases.
According to the Centers for Disease Control and Prevention (CDC) in the US, the median incubation period is 4-5 days from exposure to symptoms onset, persons with mild to moderate COVID-19 remain infectious no longer than 10 days after symptom onset, therefore, we set
$$
\Theta=\{\theta \:|\: 0\le\beta\le5,\; 0.2\le\epsilon\le0.25,\; 0.1\le\gamma\le0.2,\; 0\le\mu\le0.01\}.
$$
For the step size, considering that $\beta$, $\epsilon$ and $\gamma$ are almost at the same scale, which is $100$ times greater than $\mu$, and the permissible range of $\beta$ is much large than $\epsilon$ and $\gamma$, we set $\tau_{\epsilon}=\tau_{\gamma}=\tau$, $\tau_{\beta}=100\tau$ and $\tau_{\mu}=\tau/100$.

\subsection{Covid-19 epidemic in the US}
As of today [November 17, 2020], it has been about $300$ days since the first infection case of COVID-19 was reported in the US. We thus consider the time period $[0,300]$ and sample the data $[I_c, D_c]$ at $t_i=2i$ for $0< i\le 150$. To apply Algorithm \ref{alg2} we take $\{t_{n_j}\}_{j=0}^{s+1}$ as $\{$0, 30, 60, 90, 150, 210, 270, 300$\}$.


\textbf{Data fitting via optimal control with SEIR model.} Figure \ref{fig:fitting_us} (a) shows that our data-driven optimal control algorithm learns the data very well.
In Figure \ref{fig:fitting_us} (b), there is a noticeable peak over the second month, where the value of reproduction number $R_0$ is very large due to a dramatic increase in infection rate $\beta$. After that, $R_0$ goes down to a lower range, with a slight rise around the $6$-th month. Over the last two months we observed another increase in $R_0$. Overall, the value of $R_0$ stays above $1$ and the pattern of $R_0$ is consistent with the increasing trend of the confirmed cases. For a short time period, one may expect the transmission to continue the same way, then the learned model parameters could be used for prediction over a short coming period.

\begin{figure}[ht]
\begin{subfigure}[b]{0.5\linewidth}
\centering
\includegraphics[width=1\linewidth]{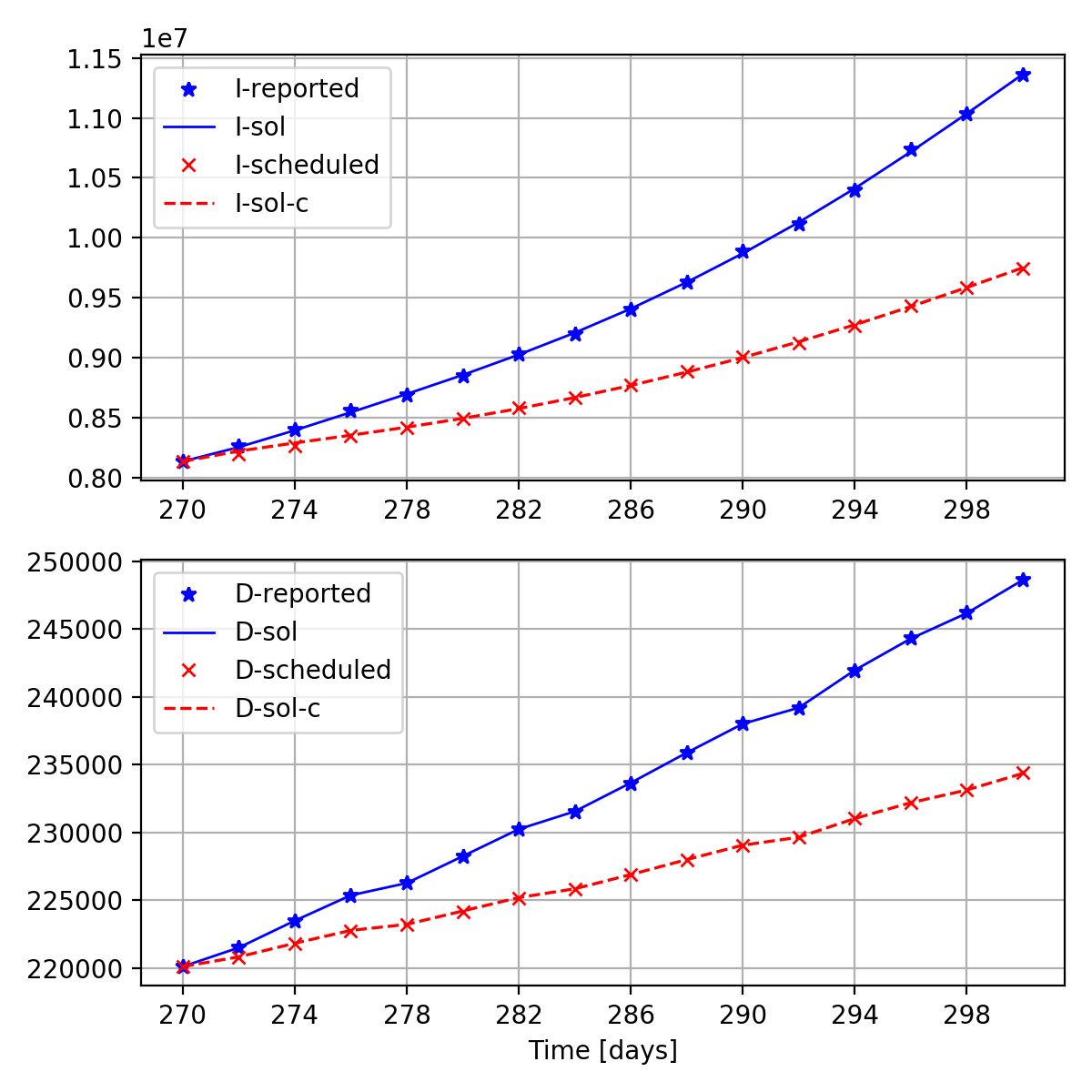}
\caption{}
\end{subfigure}%
\begin{subfigure}[b]{0.5\linewidth}
\centering
\includegraphics[width=1\linewidth]{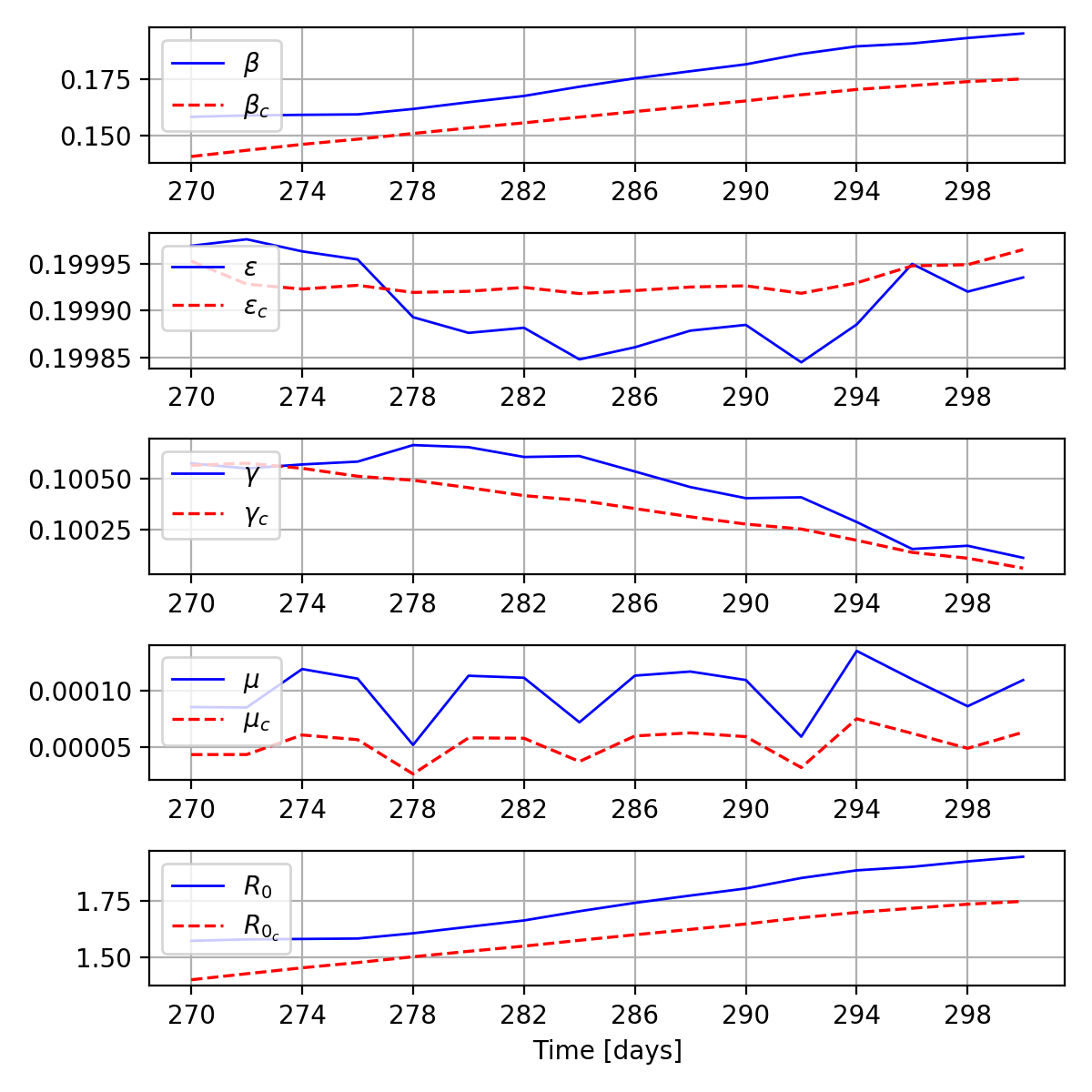}
\caption{}
\end{subfigure}%
\caption{Scheduled control for the US in $270-300$ days by SEIR model}
\label{fig:s_control_us}
\end{figure}%

\textbf{Scheduled control.} From the above prediction result, we see that without interventions, the amount of confirmed and death cases will increase rapidly. We would like to see a slow down of the epidemic spreading as the outcome of various public health interventions. With the present approach, this can be formulated as a scheduled control. Specifically, with a desired level of infection and death cases at the final time $T$, we schedule a sequence of values at intermediate times, from which we apply our algorithm to learn the optimal parameters (control function) such that the state function reaches the desired value at $T$ along the scheduled path.

For instance, starting from the $270$-th day, with the goal of controlling the cumulative number of infection and death cases at $9,746,063$ and $234,390$ on the $300$-th day, respectively, we set a pair of values for each day in the $30$ days as shown in Figure \ref{fig:s_control_us} (a), then learn the parameters from the scheduled data. The results are presented in \ref{fig:s_control_us} (b). Figure \ref{fig:s_control_us} (a) shows that the goal can be achieved by setting the parameters as what have been learned.

To compare the situations with or without a scheduled control, we also present the reported data and corresponding training results in Figure \ref{fig:s_control_us}. In fact, the scheduled intermediate values are obtained by assuming the daily increases were half of the reported daily increases. From Figure \ref{fig:s_control_us} (b), we see that the most significant difference occurs in $\beta$. This can be roughly interpreted as: if the contact rate $\beta$ could be reduced by $0.02$, the number of confirmed cases over the last $30$ days could have been reduced by $50\%$, though the corresponding $R_0$ is still greater than $1$. For virus propagation to eventually stop, $R_0$ needs to be less than $1$, for which $\beta$ must be less than $\gamma+\mu$.

\begin{figure}[ht]
\begin{subfigure}[b]{0.5\linewidth}
\centering
\includegraphics[width=1\linewidth]{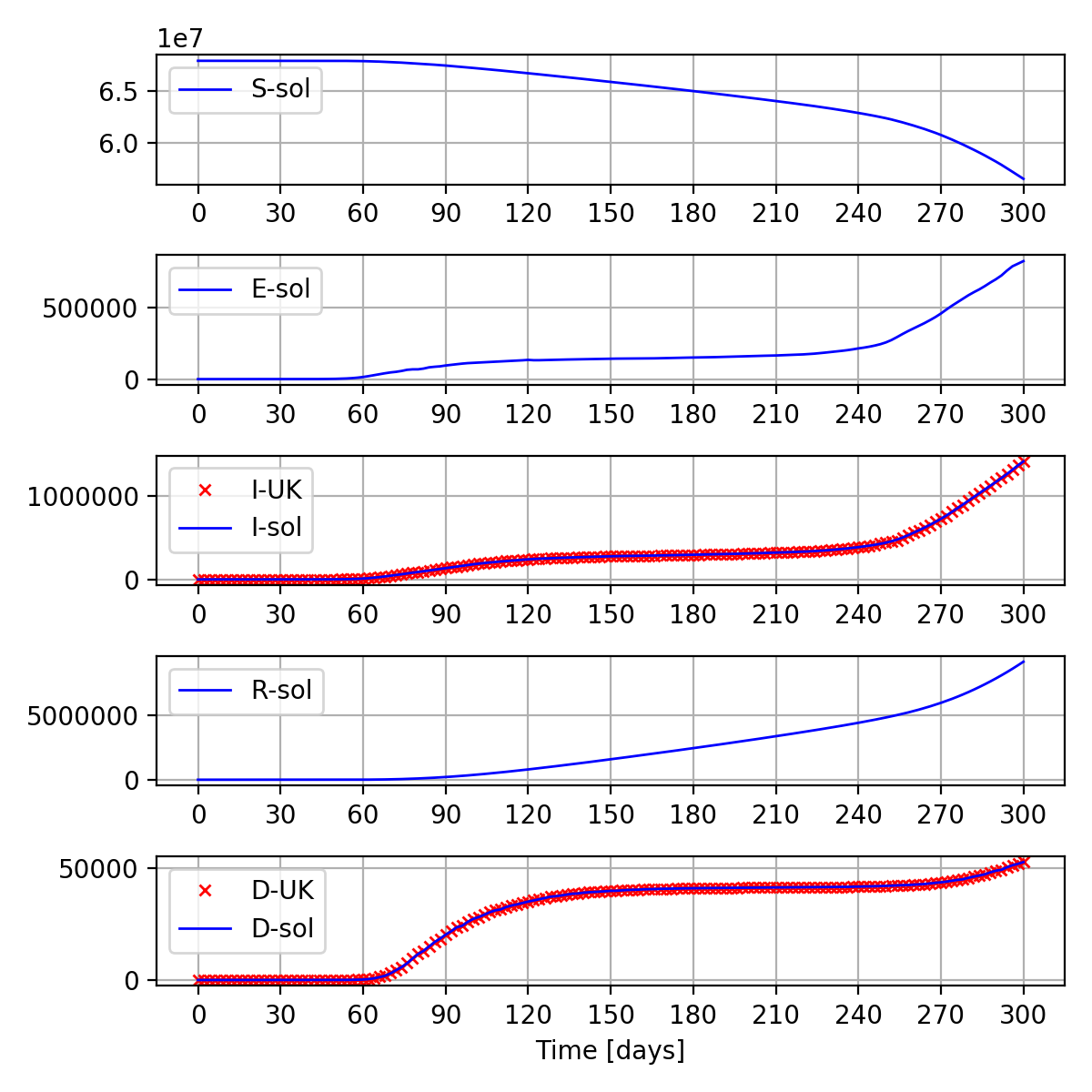}
\caption{}
\end{subfigure}%
\begin{subfigure}[b]{0.5\linewidth}
\centering
\includegraphics[width=1\linewidth]{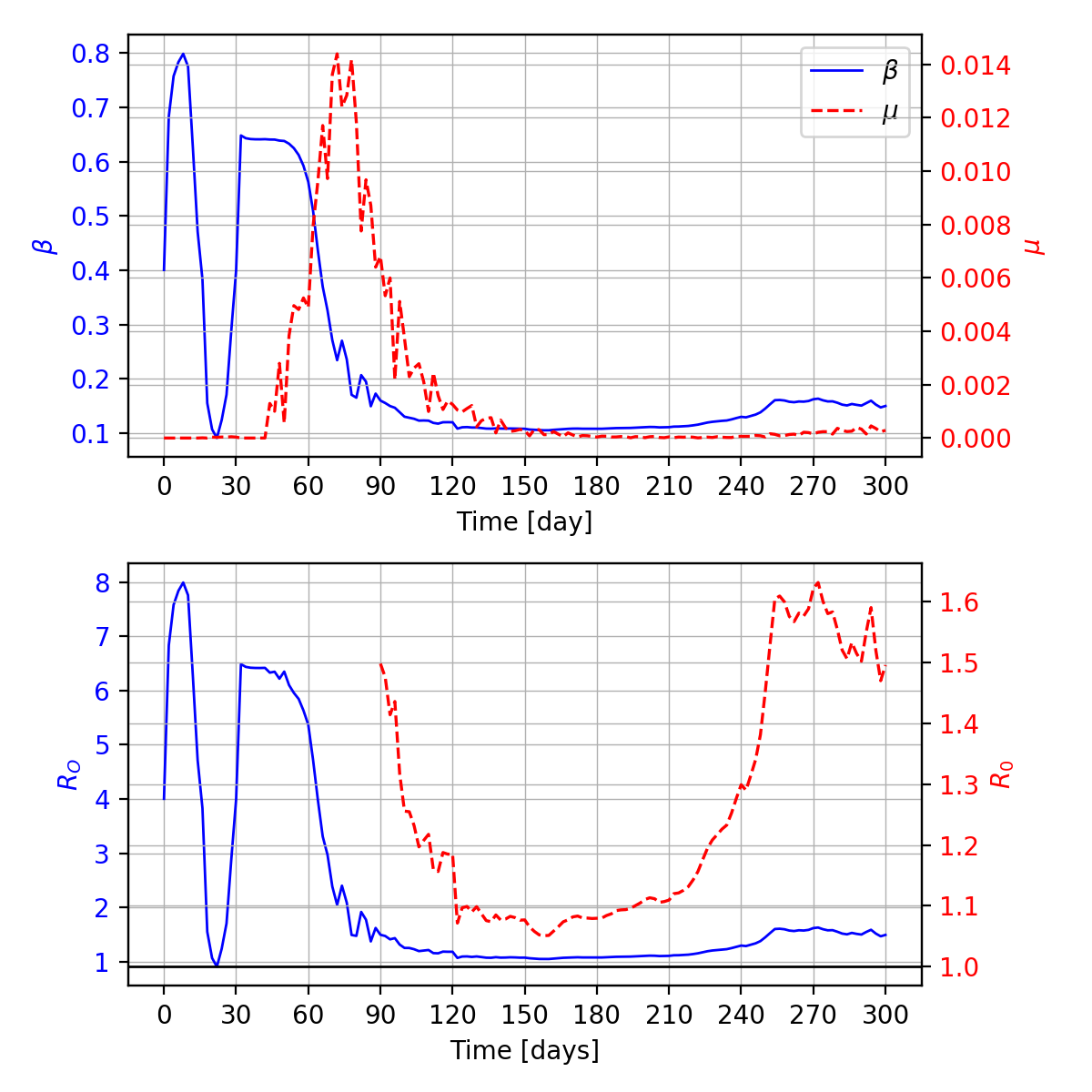}
\caption{}
\end{subfigure}%
\caption{(a) Reported and fitted cumulative infection and death cases in the UK (b) Estimated SEIR parameters and the basic reproduction number. $\beta$ ($\mu$) corresponds to the left (right) vertical axis, $\epsilon=0.2$ and $\gamma=0.1$ are almost constant. The dashed line in $R_0$ is a zoomed-in version on the tail of the solid line.}
\label{fig:fitting_uk}
\end{figure}%

\subsection{Experimental results for other countries} The coronavirus pandemic continues to affect every region of the world, but some countries are experiencing higher rates of infection, while others appear to have mostly controlled the virus. 
In order to see the virus dynamics in other regions, we also provide results for some other selected countries such as the UK, France and China. For the UK, $\{t_{n_j}\}_{j=0}^{s+1}$ are taken as $\{$0, 30, 90, 120, 150, 180, 210, 240, 300$\}$. For France, $\{t_{n_j}\}_{j=0}^{s+1}$ are taken as $\{$0, 30, 60, 90, 180, 300$\}$. For China, $\{t_{n_j}\}_{j=0}^{s+1}$ are taken as $\{$0, 30, 60, 90, 120, 150, 180, 210, 240, 270, 300$\}$.

From Figure \ref{fig:fitting_uk} and \ref{fig:fitting_fc}, we see that the confirmed cases in UK and France display similar patterns. Figure \ref{fig:fitting_cn} shows that China was hit hard early on, but the number of new cases has largely been under control for months.

\begin{figure}[ht]
\begin{subfigure}[b]{0.5\linewidth}
\centering
\includegraphics[width=1\linewidth]{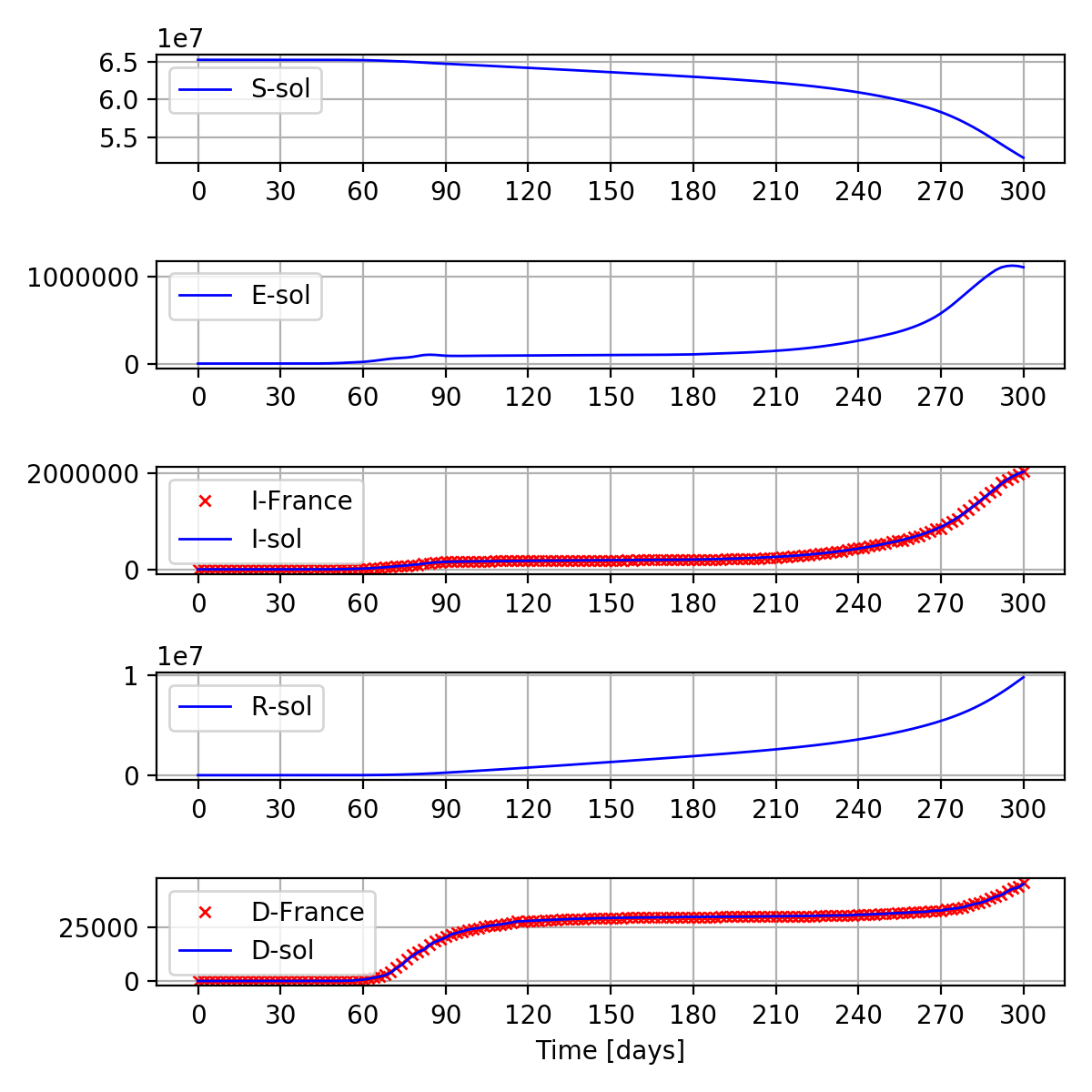}
\caption{}
\end{subfigure}%
\begin{subfigure}[b]{0.5\linewidth}
\centering
\includegraphics[width=1\linewidth]{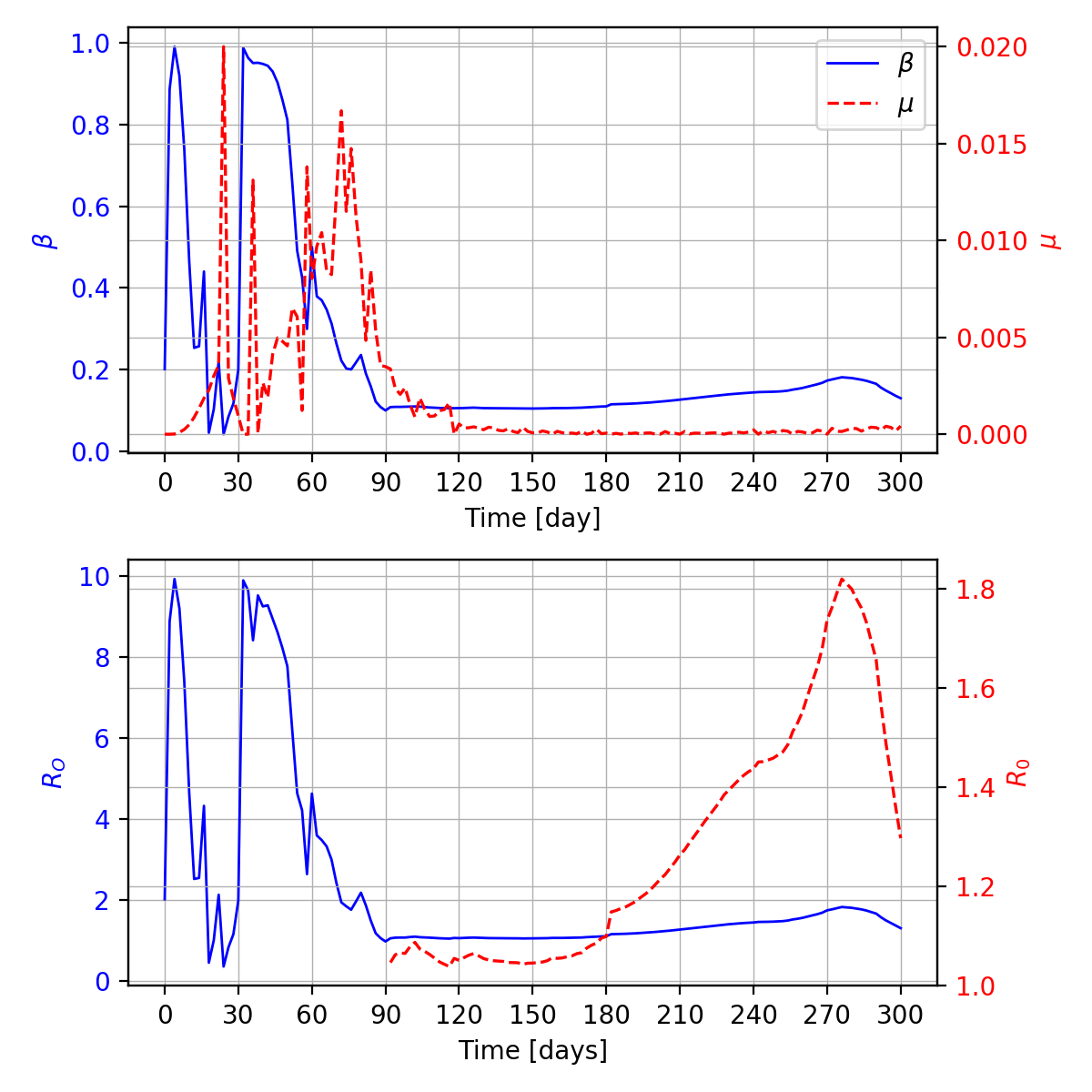}
\caption{}
\end{subfigure}%
\caption{(a) Reported and fitted cumulative infection and death cases in France (b) Estimated SEIR parameters and the basic reproduction number. $\beta$ ($\mu$) corresponds to the left (right) vertical axis, $\epsilon=0.2$ and $\gamma=0.1$ are almost constant. The dashed line in $R_0$ is a zoomed-in version on the tail of the solid line.}
\label{fig:fitting_fc}
\end{figure}%


\begin{figure}[ht]
\begin{subfigure}[b]{0.5\linewidth}
\centering
\includegraphics[width=1\linewidth]{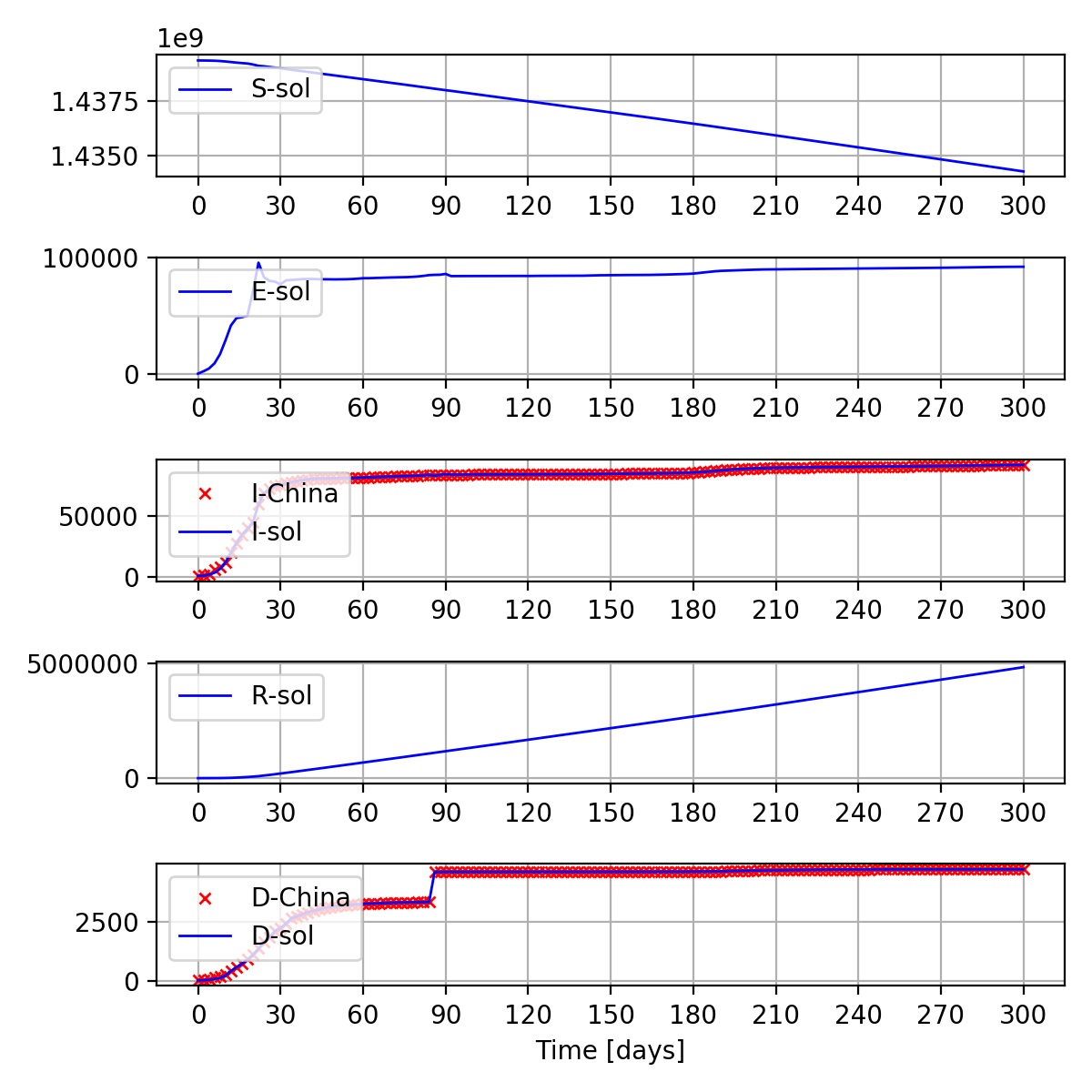}
\caption{}
\end{subfigure}%
\begin{subfigure}[b]{0.5\linewidth}
\centering
\includegraphics[width=1\linewidth]{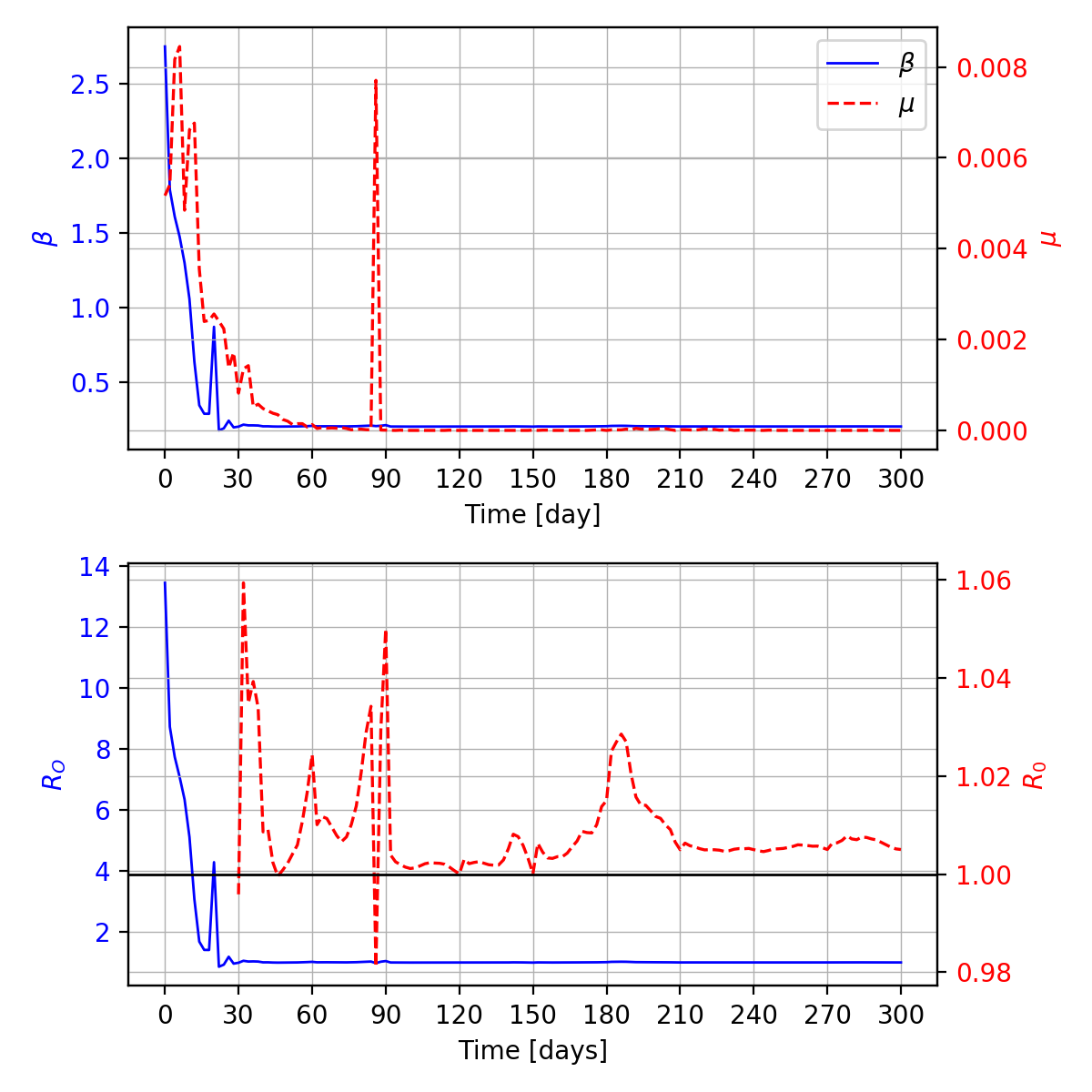}
\caption{}
\end{subfigure}%
\caption{(a) Reported and fitted cumulative infection and death cases in China (b) Estimated SEIR parameters and the basic reproduction number. $\beta$ ($\mu$) corresponds to the left (right) vertical axis, $\epsilon=0.2$ and $\gamma=0.2$ are almost constant. The dashed line in $R_0$ is a zoomed-in version on the tail of the solid line.}
\label{fig:fitting_cn}
\end{figure}%

\section{Discussion}\label{ch:con}
In this paper, we introduced a data-driven optimal control model
for learning the time-varying parameters of the SEIR model, which reveals the virus spreading process among several population groups. Here the state variables represent the population status, such as $S, E, I,  R, D$  while  the  control variables are rate parameters of transmission between population groups. The running cost is of discrete form fitting the reported data of  infection and death cases at the  observation times.  The terminal cost can be used to quantify the desired level of the total infection  and death cases at scheduled times. Numerical algorithms are derived to solve the proposed model efficiently. Experimental results show that our approach can effectively fit, predict and control the infected and deceased populations.

The data-driven modeling approach presented in this work is applicable to more advanced models such as with spatial movement effects, interaction of different class of populations, for which we need to formulate mean-field game type models over a spatial domain, see \cite{LL20} for the mean-field game formulation of a epidemic control problem.
On the computational side, our approach involves a non-convex optimization problem, which comes from the multiplicative terms of the SEIR model itself. In future work, we intend to extend our algorithm to more advanced models.

\section*{Acknowledgments}
This research was supported by the National Science Foundation under Grant DMS1812666.

\medskip

\bibliographystyle{amsplain}
\bibliography{ref}

\end{document}